\newtheorem{thm}{Theorem}
\theoremstyle{definition}
\newtheorem{exa}[thm]{Example}
\begin{document}

\title{The asymptotic strong Feller property does not imply\\
the e-property for Markov-Feller semigroups}
\author{Joanna Jaroszewska\footnote{Faculty of Mathematics and Natural Science,
Cardinal Stefan Wyszy\'{n}ski University, Warszawa, Poland\newline\indent\hspace{1mm}
\textsl{e-mail address:} \texttt{j.jaroszewska@uksw.edu.pl,} \textsl{{URL:}
\texttt{joannajaroszewska.wordpress.com}}}}
\maketitle

\begin{abstract}
T. Szarek [Stud. Math. 189 (2008), \S\thinspace 4] have discussed the
relationship between two important notions concerning Markov semigroups: the
asymptotic strong Feller property and the e-property, asserting that the
former property implies the latter one. 
In this very short note we rectify this issue exhibiting a~simple example of
a~Markov-Feller semigroup enjoying the asymptotic strong Feller property, for
which the e-property is not satisfied. (See also the comment on a connection
between the asymptotic strong Feller property and the e-property by T.
Szarek, D. Worm [ETDS 32 (2012), §\thinspace 1]). Additionally we give a very simple
example -- in comparing with the one given by T. Szarek [Stud. Math. 189
(2008), §\thinspace 4] -- showing that also the converse implication does
not hold.
\end{abstract}


\baselineskip=16.4pt



\renewcommand{\thefootnote}{}

\footnote{%
2010 \emph{Mathematics Subject Classification}: Primary 37A30; Secondary
60J25.}

\footnote{\emph{Key words and phrases}: Markov operator, asymptotic strong
Feller property, e-property, e-chain.}

\renewcommand{\thefootnote}{\arabic{footnote}} \setcounter{footnote}{0}


\section{Introduction}

In recent years, a great deal of attention has been focused on properties of
probability measures invariant with respect to Markov semigroups and in
particular on the problem of the uniqueness of such measures. First
important criterion for their uniqueness was given by R. Khas'minski\u{\i}
(see \cite{Kha60} or \cite[Proposition 4.1.1, p.~42]{DaPZab96}), who proved
it for a strongly Feller semigroup under the assumption of its
irreducibility. Throughout the last half of a century, this criterion turned
out to be very useful for applications in analysis of SDEs of various types.
Recently, motivated by the study of properties of the 2D Navier-Stokes
equations with degenerate stochastic forcing, M. Hairer and J. C. Mattingly
have developed the Khas'minski\u{\i}'s ideas introducing
a~brand new Feller-type condition. They have called it an
asymptotic strong Feller condition and used it to show the uniqueness of
invariant probability measures (see \cite{HaiMat06}). The uniqueness of invariant
probability measures has also been examined by T. Szarek in \cite{Sza08},
whose results, in comparing with the Hairer's and Mattingly's work, have taken
advantage of the
e-property and the overlapping supports condition instead of the asymptotic
strong Feller property. His motivation for \cite{Sza08} was connected with
his earlier achievement of proving the existence of an invariant probability
measure for a discrete-time Markov semigroup acting on a Polish space and
possessing the e-property, i.e. being an e-chain (see \cite{Sza06}). Let us mention that in that
way a major breakthrough in the field was made, as earlier there was no
handy and general enough criterion for the existence of an invariant
measure. On the occasion of his research, T. Szarek has discussed the
relation between the asymptotic strong Feller property and the e-property,
claiming in \cite[§ 4]{Sza08} that asymptotic strong Feller operators
possess an equicontinuous dual operator (i.e. the e-property). In a
more recent paper, \cite[§\thinspace 1]{SzaWor12}, its authors have
formulated a weaker assertion : \textquotedblleft it seems that all known
examples of Markov processes with the asymptotic strong Feller property
satisfy the e-property as well\textquotedblright. The aim of this note is
to provide the missing example of a Markov process with the asymptotic
strong Feller property which does not satisfy the e-property. Our example of
such process may be built on every nontrivial normed vector space, no matter
finite or infinite dimensional. The example is based on the construction of
a discontinuous nonlinear solution to the Cauchy equation given by G. Hamel
in \cite{Ham05} and on pieces of information about this solution which may
be found in \cite{RieSah98}.

The rest of the paper is divided into two sections: in first of them we
introduce our notation and terminology and in the second one, in Theorem \ref%
{Th:Main}, we present the example of a Markov-Feller semigroup with the
asymptotic strong Feller property, for which the e-property does not hold.
Actually we show that this example does not satisfy the asymptotic
e-property, which is a variant of the e-property, slightly general than the
original condition. Let us mention that the asymptotic e-property has been
introduced in \cite{Jar13a} and it seems that it may be a convenient tool to
examine Feller as well as some non-Feller processes (for a recent study of
non-Feller processes see \cite{Jar13b}). In the last section we also put a
converse example of a~Markov-Feller semigroup with the
e-property and without the asymptotic strong Feller property.

\section{Preliminaries}

As we want to make this note self-contained, we recall all the notions
concerning properties of Markov semigroups which we will need in the sequel.
The basic structure for our study is a nonempty metric space $(X,\varrho )$
and the set $\mathcal{M}_{\varrho }(X)$ of all finite positive Borel
measures on $X$. These measures will be used to integrate functions
from the set $B_{b}(X)$ of all real bounded Borel functions and from its
subset $C_{b}(X)$ of all real bounded continuous functions. A~family $\left( %
\mathscr{P}_{t}\right) _{t\geq 0}$ is called a \textit{Markov semigroup}
provided it is a~semigroup of positively homogeneous and additive maps $%
\mathscr{P}_{t}:\mathcal{M}_{\varrho }(X)\rightarrow \mathcal{M}_{\varrho
}(X)$ which preserve measures, i.e. the equality $\mathscr{P}_{t}\mu (X)=\mu
(X)$ is fulfilled for every $\mu \in \mathcal{M}_{\varrho }(X)$ and $t\geq 0$%
. A~Markov semigroup $\left( \mathscr{P}_{t}\right) _{t\geq 0}$\ is called a 
\textit{Markov-Feller} semigroup provided there exists a semigroup $\left( %
\mathscr{U}_{t}\right) _{t\geq 0}$ of maps $\mathscr{U}_{t}:B_{b}(X)%
\rightarrow B_{b}(X)$ such that $\mathscr{U}_{t}C_{b}(X)\subset C_{b}(X)$\
and $\left( \mathscr{U}_{t}\right) $ dual to $\left( \mathscr{P}_{t}\right) $%
. This last condition means that $\int_{X}\mathscr{U}_{t}\varphi \,\mathrm{d}%
\mu =\int_{X}\varphi \,\mathrm{d}\mathscr{P}_{t}\mu $ for every $t\geq 0$, $%
\mu \in \mathcal{M}_{\varrho }(X)$ and $\varphi \in B_{b}(X)$. A~useful
example of a Markov semigroup is a~\textit{Perron-Frobenius semigroup}
$\left( \mathscr{P}_{t}\right) _{t\geq 0}$
corresponding to a given semigroup of Borel maps $\left( S_{t}:X\rightarrow
X\right) _{t\geq 0}$, i.e. such a family that
\begin{equation}
\mathscr{P}_{t}\mu (A)=\mu \left( {S_t}^{-1} (A) \right)
\hspace{0.3in}\text{for }t\geq 0,\mu \in \mathcal{M}_{\varrho }(X),A\in 
\mathcal{B}_{X},  \label{Eq:PF}
\end{equation}%
where $\delta _{y}$ denotes, as usual, the probabilistic measure
concentrated at $y\in X$ and $\mathcal{B}_{X}$ -- the family of all Borel
subsets of $X$. We will use \eqref{Eq:PF} later on. Notice that if
a~semigroup $\left( S_{t}:X\rightarrow X\right) _{t\geq 0}$ consists of
continuous maps, then the corresponding Perron-Frobenius semigroup is
a~Markov-Feller one (see Chapter 12 of \cite{LasMac94}).

Now we recall the stability-related concepts which we would like to compare.
We start with two conditions concerning equicontinuity. In what follows $%
L_{b}(X)$ denotes the set of all real Lipschitz-continuous bounded
functions. A Markov semigroup $\left( \mathscr{P}_{t}:\mathcal{M}_{\varrho
}(X)\rightarrow \mathcal{M}_{\varrho }(X)\right) _{t\geq 0}$ is said to have
the \textit{e-property }at $y\in X$ if for every $f\in L_{b}(X)$, the family 
$\left( \mathscr{U}_{t}f\right) _{t\geq 0}$ is equicontinuous at $y\in X$,
i.e. 
\begin{equation*}
\lim_{x\rightarrow y}\sup_{t\geq 0}\left\vert \mathscr{U}_{t}f(y)-\mathscr{U}%
_{t}f(x)\right\vert =0.
\end{equation*}%
It is said to have the \textit{asymptotic e-property }at $y\in X$ if for
every $f\in L_{b}(X)$ 
\begin{equation}
\lim_{x\rightarrow y}\limsup_{t\rightarrow \infty }\left\vert \mathscr{U}%
_{t}f(y)-\mathscr{U}_{t}f(x)\right\vert =0.  \label{Eq:AEq}
\end{equation}%
We say that the semigroup $\left( \mathscr{P}_{t}\right) _{t\geq 0}$ has the 
\textit{(asymptotic) e-property }provided the appropriate property holds at
every $y\in X$. If a Markov semigroup possess the e-property, the Markov chain
it generates is called an \textit{e-chain}. This last name was popularized 
thanks to \cite{MeyTwe09}.
Clearly, if a Markov semigroup $\left( \mathscr{P}_{t}\right) _{t\geq 0}$ has
the e-property at $y\in X$, it has the asymptotic e-property at $y$ as well.
The converse implication does not hold, see \cite[Example 7]{Jar13a}.

Now we give, after \cite{HaiMat06}, the definition of the asymptotic strong
Feller property. Firstly we need to introduce some technical notions
concerning pseudo-metrics. For a pseudo-metric $\sigma $ on $X$, by $%
B_{\sigma }(x,\gamma )$ we denote the open $\sigma $-ball with the radius $%
\gamma >0$ and the center $x\in X$, i.e. $B_{\sigma }(x,\gamma )=\{y\in
X:\sigma (x,y)<\gamma \}$. By $\sigma ^{y}$ we denote the function $\sigma
^{y}:X\rightarrow \mathbb{R}$ defined by the formula 
\begin{equation*}
\sigma ^{y}(x)=\sigma (x,y)\hspace{0.3in}\text{for }x\in X. 
\end{equation*}%
Notice that for every $y\!\in\! X$, $\sigma^{y}\!\in\! Lip_{\sigma }(1)$, where 
$$Lip_{\sigma }(1)=\left\{ \varphi\! :\!X\!\rightarrow\!\mathbb{R}|
\forall\,x,y\!\in\! X:\,\left\vert \varphi (x)\!-\!\varphi (y)\right\vert\leq
\sigma (x,y) \right\}.$$ 
An increasing sequence $\left( \varrho _{n}:X\times
X\rightarrow \lbrack 0,\infty )\right) _{n\in \mathbb{N}}$ of pseudo-metrics
continuous with respect to $(X,\varrho )$ is called a \textit{totally
separating system of pseudo-metrics} if $\varrho _{n}(x,y)\underset{%
n\rightarrow \infty }{\longrightarrow }1$ for every $x\neq y$. Every $%
\varrho $-continuous pseudo-metric $\sigma $ on $X$ may be extended to $%
\mathcal{M}_{\varrho }(X)$ by the formula%
\begin{equation*}
\left\Vert \mu _{1}-\mu _{2}\right\Vert _{\sigma }=\sup_{\varphi \in
Lip_{\sigma }(1)}\left\vert \int_{X}\varphi \,\mathrm{d}(\mu _{1}-\mu
_{2})\right\vert .
\end{equation*}%
This pseudo-metric on $\mathcal{M}_{\varrho }(X)$ is well known under the
name the \textit{Wasserstein pseudo-metric}.

A Markov semigroup $\left( \mathscr{P}_{t}:\mathcal{M}_{\varrho
}(X)\rightarrow \mathcal{M}_{\varrho }(X)\right) _{t\geq 0}$ of operators
defined for a~metric space $\left( X,\varrho \right) $ is said to have the 
\textit{asymptotic strong Feller property} at $y\in X$ provided there are: a
sequence $\left( t_{n}\right) _{n\in \mathbb{N}}$ of positive reals and a
totally separating system of pseudo-metrics $\left( \varrho _{n}\right)
_{n\in \mathbb{N}}$ on $X$ such that%
\begin{equation*}
\lim_{\gamma \rightarrow 0^{+}}\limsup_{n\rightarrow \infty }\sup_{x\in
B_{\varrho }(y,\gamma )}\left\Vert \mathscr{P}_{t_{n}}\delta _{y}-\mathscr{P}%
_{t_{n}}\delta _{x}\right\Vert _{\varrho _{n}}=0.
\end{equation*}%
It is said to have the \textit{asymptotic strong Feller property} if the
above condition holds at every $y\in X$.

In practice (see e.g. \cite{HaiMat06,HenMatWoo11,Xu11}), to verify that
certain Markov semigroup satisfies the asymptotic strong Feller property one
usually takes $t_{n}=n$ and $\varrho _{n}=1\wedge a_{n}\varrho $ for some
fixed sequence $\left( a_{n}\right) $ of positive reals with $a_{n}\uparrow
\infty $. Our choice of $\left( t_{n}\right) $ and $\left( \varrho
_{n}\right) $ in the proof of Theorem \ref{Th:Main} is similar to this one.

We end this section with providing some information on Hamel bases which
will be needed in our proof of Theorem \ref{Th:Main}. A set $\mathbb{B}%
\subset \mathbb{R}$ is called \textit{a Hamel basis} for $\mathbb{R}$ if
every element of $\mathbb{R}$ is a unique finite rational linear combination
of elements of $\mathbb{B}$. The existence of a Hamel basis is guaranteed by
the axiom of choice (see e.g. \cite{Ham05}, \cite[p. 132]{Tay85}). The Hamel
bases are useful for constructions of functions with nontypical properties,
as the following observation, taken from \cite[Theorem 1.6, p.10]{RieSah98},
shows.

\begin{thm}
\label{Th:Ham}If $\mathbb{B}$ is a Hamel basis for $\mathbb{R}$ and $g:%
\mathbb{B}\rightarrow \mathbb{R}$ is an arbitrary function then there exists
a function $f_{g}:\mathbb{R}\rightarrow \mathbb{R}$ which satisfies the
Cauchy equation (i.e. $f_{g}(x)+f_{g}(y)=f_{g}(x+y)$ for all $x,y\in \mathbb{%
R}$, i.e. $f_{g}$ is additive) and such that $f_{g}|_{\mathbb{B}}=g|_{%
\mathbb{B}}$.
\end{thm}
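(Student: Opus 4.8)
The plan is to view $\mathbb{R}$ as a vector space over the field $\mathbb{Q}$ and to define $f_{g}$ as the unique $\mathbb{Q}$-linear map whose values on the basis $\mathbb{B}$ are prescribed by $g$; additivity over $\mathbb{R}$ then becomes a formal consequence of $\mathbb{Q}$-linearity. Concretely, I would first use the defining property of a Hamel basis to record that every $x\in\mathbb{R}$ admits a unique representation
\begin{equation*}
x=\sum_{i=1}^{n}q_{i}b_{i},
\end{equation*}
with $n\in\mathbb{N}$, pairwise distinct $b_{1},\dots,b_{n}\in\mathbb{B}$ and nonzero rationals $q_{1},\dots,q_{n}$ (the empty combination being reserved for $x=0$). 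I would then simply set $f_{g}(x)=\sum_{i=1}^{n}q_{i}g(b_{i})$. The uniqueness of the representation guarantees that this prescription is unambiguous, so $f_{g}$ is a well-defined function $\mathbb{R}\to\mathbb{R}$.

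Next I would verify the two required conclusions. For the restriction property, observe that each $b\in\mathbb{B}$ is already its own one-term rational combination $b=1\cdot b$, whence $f_{g}(b)=g(b)$ directly from the definition, i.e. $f_{g}|_{\mathbb{B}}=g|_{\mathbb{B}}$. For the Cauchy equation, given $x,y\in\mathbb{R}$ I would write both of them as rational combinations over a common finite subset $\{b_{1},\dots,b_{m}\}\subset\mathbb{B}$, allowing zero coefficients so that the two index sets coincide, say $x=\sum_{i=1}^{m}q_{i}b_{i}$ and $y=\sum_{i=1}^{m}r_{i}b_{i}$. Then $x+y=\sum_{i=1}^{m}(q_{i}+r_{i})b_{i}$, and by uniqueness this is precisely the Hamel representation of $x+y$; hence
\begin{equation*}
f_{g}(x+y)=\sum_{i=1}^{m}(q_{i}+r_{i})g(b_{i})=\sum_{i=1}^{m}q_{i}g(b_{i})+\sum_{i=1}^{m}r_{i}g(b_{i})=f_{g}(x)+f_{g}(y).
\end{equation*}

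The argument is essentially routine linear algebra, so there is no deep obstacle; the only point demanding genuine care is well-definedness, which rests entirely on the uniqueness clause in the definition of a Hamel basis. The mild bookkeeping subtlety I would watch most closely is the passage to a common finite support in the additivity step: one must permit vanishing coefficients and note that inserting or deleting zero terms leaves the value of $f_{g}$ unchanged, so that evaluating $f_{g}$ on the padded representation agrees with its value on the reduced one. Once this is observed, both the restriction identity and the Cauchy equation follow immediately.
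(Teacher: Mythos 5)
Your proof is correct and is the standard $\mathbb{Q}$-linear extension argument; the paper itself gives no proof of this statement, simply citing \cite[Theorem 1.6, p.~10]{RieSah98}, where essentially this same construction is carried out. Your attention to well-definedness and to padding with zero coefficients covers the only delicate points, so nothing is missing.
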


We will use this theorem below. Notice that the extension $f_{g}:\mathbb{R}%
\rightarrow \mathbb{R}$ of a~given function $g:\mathbb{B}\rightarrow \mathbb{%
R}$ is unique, which is guaranteed by \cite[Theorem 1.5, p.9]{RieSah98}.

\section{Examples of Markov-Feller semigroups}

\begin{thm}
\label{Th:Main}If $\left( X,\left\Vert \cdot \right\Vert \right) $ is a
normed vector space over $\mathbb{R}$ or $\mathbb{C}$, $X\neq \{0\}$ and $%
\varrho $ is a~metric induced by $\left\Vert \cdot \right\Vert $ then there
exists a Markov-Feller semigroup $\left( \mathscr{P}_{t}\right. :\mathcal{M}_{\varrho }(X)\rightarrow \left. 
\mathcal{M}_{\varrho }(X)\right) _{t\geq 0}$ which
does have the asymptotic strong Feller property and does not have the
asymptotic e-property (and a fortiori the e-property) at every $y\in X$.
\end{thm}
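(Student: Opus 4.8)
The plan is to realize $(\mathscr{P}_t)$ as the Perron--Frobenius semigroup \eqref{Eq:PF} of the one-parameter family of dilations $S_t\colon X\to X$, $S_t(x)=e^{\lambda(t)}x$, where $\lambda\colon\mathbb{R}\to\mathbb{R}$ is a \emph{discontinuous} additive function supplied by Theorem \ref{Th:Ham} (take a Hamel basis $\mathbb{B}$ and a datum $g$ for which $f_g=\lambda$ is not of the form $x\mapsto cx$, e.g. $g$ nonzero at a single basis point). Additivity of $\lambda$ gives $S_{t+s}=S_t\circ S_s$ and $S_0=\mathrm{id}$, and each $S_t$ is continuous (indeed linear), so the associated Perron--Frobenius semigroup is Markov--Feller by Chapter 12 of \cite{LasMac94}. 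Since the maps are deterministic, $\mathscr{P}_t\delta_x=\delta_{S_tx}$ and $\mathscr{U}_tf(x)=f(S_tx)$; and because $\sigma^b\in Lip_\sigma(1)$ one checks $\|\delta_a-\delta_b\|_\sigma=\sigma(a,b)$ for every $\varrho$-continuous pseudo-metric $\sigma$, so both target properties reduce to statements about the numbers $\|S_tx-S_ty\|=e^{\lambda(t)}\|x-y\|$. The decisive feature I shall exploit is that, being additive and discontinuous, $\lambda$ has graph dense in $\mathbb{R}^2$; consequently $\{\lambda(t):t>T\}$ is dense in $\mathbb{R}$ for every $T$, whence $\{e^{\lambda(t)}:t>T\}$ is dense in $(0,\infty)$ and $\liminf_{t\to\infty}\lambda(t)=-\infty$.

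For the asymptotic strong Feller property I would follow the standard recipe with $\varrho_n=1\wedge a_n\varrho$. Using $\liminf_{t\to\infty}\lambda(t)=-\infty$, choose $t_1<t_2<\cdots\to\infty$ with $\lambda(t_n)\to-\infty$ and set $a_n=e^{-\lambda(t_n)/2}$, which (after passing to a subsequence) we may arrange to increase to $\infty$; then $(\varrho_n)$ is an increasing, $\varrho$-continuous, totally separating system. A direct computation gives $\|\mathscr{P}_{t_n}\delta_y-\mathscr{P}_{t_n}\delta_x\|_{\varrho_n}=1\wedge a_ne^{\lambda(t_n)}\|x-y\|=1\wedge e^{\lambda(t_n)/2}\|x-y\|$, so for each fixed $\gamma$ the supremum over $B_\varrho(y,\gamma)$ is at most $1\wedge e^{\lambda(t_n)/2}\gamma\to0$ as $n\to\infty$. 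Hence the inner $\limsup_n$ vanishes for every $\gamma$, the outer $\gamma$-limit is $0$, and the estimate is uniform in $y$, so the asymptotic strong Feller property holds at every point.

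To destroy the asymptotic e-property I would test with the bounded Lipschitz function $f=\sin\circ\ell$, where $\ell\colon X\to\mathbb{R}$ is a real-linear continuous functional with $\|\ell\|=1$ (Hahn--Banach, $X\neq\{0\}$); indeed $|f(x)-f(x')|\le|\ell(x)-\ell(x')|\le\|x-x'\|$. By linearity $\mathscr{U}_tf(x)=\sin\!\big(e^{\lambda(t)}\ell(x)\big)$, and the density of $\{e^{\lambda(t)}:t>T\}$ in $(0,\infty)$ together with continuity yields, for all $x,y$, $\limsup_{t\to\infty}|\mathscr{U}_tf(y)-\mathscr{U}_tf(x)|=\sup_{r>0}|\sin(r\ell(y))-\sin(r\ell(x))|$. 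The core point is then purely trigonometric: this supremum stays bounded below as $x\to y$. Concretely, fix $y$ and take $x_n=y+\delta_nu$ with $\delta_n\downarrow0$ and $\ell(u)\neq0$, so $\ell(x_n)\to\ell(y)$ but $\ell(x_n)\neq\ell(y)$. If $\ell(y)\neq0$, choosing $\delta_n$ so that $\ell(x_n)/\ell(y)$ is irrational makes $(r\ell(x_n),r\ell(y))\bmod 2\pi$ dense in the torus as $r\to\infty$, whence $\sup_{r>0}|\sin(r\ell(x_n))-\sin(r\ell(y))|=2$; if $\ell(y)=0$, then $\sup_{r>0}|\sin(r\ell(x_n))|=1$. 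In either case $\limsup_{x\to y}\limsup_{t\to\infty}|\mathscr{U}_tf(y)-\mathscr{U}_tf(x)|>0$, so \eqref{Eq:AEq} fails at every $y$, and a fortiori the e-property fails everywhere.

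I expect the main obstacle to be this last step: one must verify that the oscillatory test function keeps the trajectories $\varrho$-separated for arbitrarily large $t$ even as $x\to y$, which is exactly where the pathology of Hamel's additive function is indispensable. The identity $\limsup_{t\to\infty}=\sup_{r>0}$ rests on the density of $\{e^{\lambda(t)}\}$, while the uniform lower bound on the supremum rests on an equidistribution argument for $(r\ell(x),r\ell(y))$ on the torus, for which the ratio $\ell(x)/\ell(y)$ must be taken irrational along a sequence $x\to y$. By contrast the asymptotic strong Feller estimate is routine once the contracting times $t_n$ with $\lambda(t_n)\to-\infty$ have been isolated.
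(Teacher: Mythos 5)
Your construction coincides with the paper's: both take the Perron--Frobenius semigroup of the dilations $S_t(x)=e^{\lambda(t)}x$ with a discontinuous Hamel-additive exponent $\lambda=f_g$, and both halves of your verification are correct, so the proposal works. The differences lie in the execution. The paper rigs $g$ at two basis elements $b_1,b_2$ so that $f_g(|b_1|)<0<f_g(|b_2|)$ and then argues only along two arithmetic progressions of times: $t=n$, where the factor $e^{nf_g(|b_1|)}$ contracts and yields the asymptotic strong Feller bound with $t_n=n$, $\varrho_n=1\wedge n\varrho$; and $t=m|b_2/b_1|$, where the factor $e^{mf_g(|b_2|)}$ blows up and destroys the asymptotic e-property when tested against $\varrho^0$. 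You instead invoke the density of the graph of a discontinuous additive function in $\mathbb{R}^2$, extract contracting times from $\liminf_{t\to\infty}\lambda(t)=-\infty$ for the positive half, and for the negative half replace blow-up along a subsequence by density of $\{e^{\lambda(t)}:t>T\}$ in $(0,\infty)$ combined with the bounded observable $\sin\circ\ell$ and a Kronecker equidistribution argument on the torus (your device of choosing $\delta_n$ with $\ell(x_n)/\ell(y)$ irrational is exactly what makes that step go through). Your route is slightly longer, but it buys something real: your test function is bounded, as membership in $L_b(X)$ formally requires, whereas the paper's $\varrho^0=\Vert\cdot\Vert$ is Lipschitz but unbounded; so your argument shows the failure already for genuine bounded Lipschitz observables, and it also needs no fine-tuning of $g$ at two basis points --- any discontinuous additive $\lambda$ will do.
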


\begin{proof}
Let $\mathbb{B}\subset \mathbb{R}$ be a Hamel basis for $\mathbb{R}$. Fix
any $b_{1},b_{2}\in \mathbb{B}$ and $g:\mathbb{B}\rightarrow \mathbb{R}$
such that $b_{1}\cdot g(b_{1})<0<b_{2}\cdot g(b_{2})$. Let $f_{g}$ be the
additive extension of $g$, existing by Theorem \ref{Th:Ham}. Notice that
since $\frac{f_{g}(|b_{1}|)}{|b_{1}|}<0<\frac{f_{g}(|b_{2}|)}{|b_{2}|}$, $%
f_{g}$ is nonlinear and hence (highly) discontinuous.

Let $\left( X,\left\Vert \cdot \right\Vert \right) $ be any fixed nontrivial
(i.e. $\neq \{0\}$) normed vector space over $\mathbb{R}$ or $\mathbb{C}$
and let $\varrho $ be a metric induced by $\left\Vert \cdot \right\Vert $.
Next, let%
\begin{equation*}
S_{t}(x)=e^{f_{g}(|b_{1}|t)}x\hspace{0.3in}\text{for }t\geq 0,x\in X.
\end{equation*}%
As $f_{g}$ is additive, $\left( S_{t}:X\rightarrow X\right) _{t\geq 0}$ is a
semigroup and hence it generates, via \eqref{Eq:PF}, the Perron-Frobenius
semigroup $\left( \mathscr{P}_{t}\right) _{t\geq 0}$. Since $S_{t}$ is
continuous for each $t\geq 0$, $\left( \mathscr{P}_{t}\right) _{t\geq 0}$ is
a Markov-Feller semigroup, with the dual semigroup $\left( \mathscr{U}%
_{t}\right) _{t\geq 0}$ given by the formula%
\begin{equation*}
\mathscr{U}_{t}\varphi (x)=\varphi \left( e^{f_{g}(|b_{1}|t)}x\right) 
\hspace{0.3in}\text{for }t\geq 0,x\in X,\varphi \in B_{b}(X).
\end{equation*}

Firstly we shall show that $\left( \mathscr{P}_{t}\right) _{t\geq 0}$
satisfies the asymptotic strong Feller property with $t_{n}=n$ and $\varrho
_{n}=1\wedge n\varrho $. As $Lip_{\varrho _{n}}(1)\subset \{\varphi |\frac{%
\varphi }{n}\in Lip_{\varrho }(1)\}$, we obtain 
\begin{equation*}
\left\Vert \mathscr{P}_{n}\delta _{y}-\mathscr{P}_{n}\delta _{x}\right\Vert
_{\varrho _{n}}\leq \sup_{\psi \in Lip_{\varrho }(1)}n\left\vert \mathscr{U}%
_{n}\psi (x)-\mathscr{U}_{n}\psi (y)\right\vert \leq
ne^{nf_{g}(|b_{1}|)}\varrho (x,y)
\end{equation*}%
for every $x,y\in X$. Hence, since $f_{g}(|b_{1}|)<0$, 
\begin{equation*}
\sup_{x\in B_{\varrho }(y,\gamma )}\left\Vert \mathscr{P}_{n}\delta _{y}-%
\mathscr{P}_{n}\delta _{x}\right\Vert _{\varrho _{n}}\leq \gamma
ne^{nf_{g}(|b_{1}|)}\underset{n\rightarrow \infty }{\longrightarrow }0
\end{equation*}%
for every $y\in X$ and the asymptotic strong Feller property at arbitrarily
chosen $y\in X$ follows.

Now we shall examine the asymptotic e-property. Let arbitrary $y\in X$ be
fixed together with a sequence $\left( x_{n}\right) $ of points converging
to $y$ such that $\left\Vert y\right\Vert \neq \left\Vert x_{n}\right\Vert $
for every $n\in \mathbb{N}$. We check that \eqref{Eq:AEq} does not hold for $%
f=\varrho ^{0}$. As every neighbourhood of $y$ contains some $x_{n}$ and for
every $n\in \mathbb{N}$ we have%
\begin{eqnarray*}
\limsup_{t\rightarrow \infty }\left\vert \mathscr{U}_{t}\varrho ^{0}(y)-%
\mathscr{U}_{t}\varrho ^{0}(x_{n})\right\vert  &\geq &\limsup_{t\uparrow
\infty ,t\in \left\{ m\left\vert \frac{b_{2}}{b_{1}}\right\vert :m\in 
\mathbb{N}\right\} }\left\vert \mathscr{U}_{t}\varrho ^{0}(y)-\mathscr{U}%
_{t}\varrho ^{0}(x_{n})\right\vert  \\
&=&\left\vert \left\Vert y\right\Vert -\left\Vert x_{n}\right\Vert
\right\vert \limsup_{m\rightarrow \infty }e^{mf_{g}(|b_{2}|)}=\infty ,
\end{eqnarray*}%
the asymptotic e-property does not hold at every $y\in X$.
\end{proof}

\begin{exa}
We end this note with the very simple example of a Markov-Feller semigroup,
which, conversely to the semigroup from the above proof of Theorem \ref%
{Th:Main}, does have the e-property and does not satisfy the asymptotic
strong Feller property. It is a simplest possible example: the semigroup of
identity operators -- please compare it with the example from \cite[§%
\thinspace 4]{Sza08}. Fix a metric space $(X,\varrho )$ with $l(X)\neq
\emptyset $, where $l(X)$ denotes the set of all limit points of $X$, i.e.
the set consisted of all such $x\in X$ that every neighbourhood of $x$
contains at least two distinct points: $x$ and another one. Consider $\left( %
\mathscr{P}_{t}\right) _{t\geq 0}$ defined by the formula 
\begin{equation}
\mathscr{P}_{t}\mu =\mu \hspace{0.3in}\text{for }t\geq 0,\mu \in \mathcal{M}%
_{\varrho }(X).  \label{Eq:triv}
\end{equation}%
As identity maps $\mathscr{U}_{t}=id_{B_{b}(X)}$ ($t\geq 0$) form a
semigroup dual to $\left( \mathscr{P}_{t}\right) _{t\geq 0}$, this last
semigroup is a Markov-Feller one and satisfies the e-property. To check that
the asymptotic strong Feller property does not hold at every point belonging
to $l(X)$, we fix any sequence $\left( t_{n}\right) $ of positive reals and
a totally separating system of pseudo-metric $\left( \varrho _{n}\right) $.
Next choose arbitrarily $y\in l(X)$ and $\gamma >0$. We shall see that 
\begin{equation}
\limsup_{n\rightarrow \infty }\sup_{x\in B_{\varrho }(y,\gamma )}\left\Vert %
\mathscr{P}_{t_{n}}\delta _{y}-\mathscr{P}_{t_{n}}\delta _{x}\right\Vert
_{\varrho _{n}}\geq \frac{1}{2}.  \label{Eq:id}
\end{equation}%
Indeed, since $\varrho _{n}^{y}\in Lip_{\varrho _{n}}(1)$ and there exist $%
z\in B_{\varrho }(y,\gamma )$ and $n_{0}\in \mathbb{N}$ such that $\varrho
_{n}(z,y)\geq \frac{1}{2}$ for $n\geq n_{0}$, we have%
\begin{equation*}
\sup_{x\in B_{\varrho }(y,\gamma )}\left\Vert \mathscr{P}_{t_{n}}\delta _{y}-%
\mathscr{P}_{t_{n}}\delta _{x}\right\Vert _{\varrho _{n}}\geq \sup_{x\in
B_{\varrho }(y,\gamma )}\left\vert \,\varrho _{n}^{y}(x)-\varrho
_{n}^{y}(y)\right\vert \geq \frac{1}{2}\hspace{0.3in}\text{for }n\geq n_{0}.
\end{equation*}%
Hence \eqref{Eq:id} follows. The lack of the asymptotic strong Feller property
is not surprising here provided we take into account the smoothing character of this condition, 
together with the motivation for this notion and its relation to the strong Feller property (see \cite{Hai08}). Clearly our
semigroup \eqref{Eq:triv} is not strong Feller as well (this is also not strange
-- see \cite[Remark 3.10]{HaiMat06}).
\end{exa}

\vskip 20mm


\providecommand{\bysame}{\leavevmode\hbox to3em{\hrulefill}\thinspace}

\end{document}